\def\Z{\mathbb{Z}}
\def\Q{\mathbb{Q}}
\def\R{\mathbb{R}}
\def\H{\mathbb{H}}
\def\N{\mathbb{N}}
\def\C{\mathbb{C}}
\def\P{\mathbb{P}}
\def\SL{{\rm SL}}
\def\Mp{{\rm Mp}}
\newcommand{\pmmatrix}[4]{\arraycolsep=.3em\def\arraystretch{1.1}\Bigl( \, \begin{mmatrix} #1&#2\\#3&#4\end{mmatrix} \, \Bigr)}
\renewcommand{\pmatrix}[4]{\left(\begin{smallmatrix}#1 & #2 \\ #3 & #4\end{smallmatrix}\right)}
\renewcommand{\bar}[1]{\overline{#1}}
\renewcommand{\sl}{\big|}
\DeclareMathOperator{\ord}{ord}
\def\mathcenterto#1#2{\mathclap{\phantom{#1}\mathclap{#2}}\phantom{#1}}
\let\old@widetilde\widetilde
\def\widetildeto#1#2{\mathcenterto{#2}{\old@widetilde{\mathcenterto{#1}{#2\,}}}}
\def\til{\widetildeto{f}}
\newtheorem{theorem}{Theorem}
\newtheorem{lemma}[theorem]{Lemma}
\newtheorem{proposition}[theorem]{Proposition}
\theoremstyle{remark}
\numberwithin{equation}{section}
\renewcommand*{\arraystretch}{1.3}
\begin{document}

\title[Mock theta conjectures]{Vector-valued modular forms and \\ the Mock Theta Conjectures}
\date{\today}
\author{Nickolas Andersen}

\begin{abstract}
	The mock theta conjectures are ten identities involving Ramanujan's fifth-order mock theta functions.
	The conjectures were proven by Hickerson in 1988 using $q$-series methods.
	Using methods from the theory of harmonic Maass forms, specifically work of Zwegers and Bringmann--Ono, Folsom reduced the proof of the mock theta conjectures to a finite computation.
	Both of these approaches involve proving the identities individually, relying on work of Andrews--Garvan.
	Here we give a unified proof of the mock theta conjectures by realizing them as an equality between two nonholomorphic vector-valued modular forms which transform according to the Weil representation.
	We then show that the difference of these vectors lies in a zero-dimensional vector space.
\end{abstract}

\maketitle

\allowdisplaybreaks

\section{Introduction}

In his last letter to Hardy, dated three months before his death in early 1920, Ramanujan briefly described a new class of functions which he called mock theta functions, and he listed 17 examples \cite[p.~220]{ramanujan-letters}.
These he separated into three groups: four of third order, ten of fifth order, and three of seventh order.
The fifth order mock theta functions he further divided into two groups\footnote{While Ramanujan reused the letters $f$, $\phi$, $\psi$, $\chi$, and $F$ in each group, the usual convention is to write those in the first group with a subscript `0' and those in the second group with a subscript `1'.}; for example, four of these fifth order functions are
\begin{alignat*}{2}
	f_0(q) &= \sum_{n=0}^\infty \frac{q^{n^2}}{(-q;q)_n}, \quad & \quad f_1(q) &= \sum_{n=0}^\infty \frac{q^{n(n+1)}}{(-q;q)_n}, \\
	F_0(q) &= \sum_{n=0}^\infty \frac{q^{2n^2}}{(q;q^2)_n}, & F_1(q) &= \sum_{n=0}^\infty \frac{q^{2n(n+1)}}{(q;q^2)_{n+1}}. 
\end{alignat*}
Here we have used the standard $q$-Pochhammer notation $(a;q)_n := \prod_{m=0}^{n-1} (1-aq^m)$.

The mock theta conjectures are ten identities found in Ramanujan's lost notebook, each involving one of the fifth-order mock theta functions.
The identities for the four mock theta functions listed above are (following the notation of \cite[p.~206]{gordon-mcintosh-57}, and correcting a sign error in the fourth identity in that paper; see also \cite{andrews-garvan,gordon-mcintosh-survey})
\begin{align}
	f_0(q) &= -2q^2 M\left(\mfrac 15, q^{10}\right) + \theta_4(0,q^5) G(q), \label{mtc-1} \\
	f_1(q) &= -2q^3 M\left(\mfrac 25, q^{10}\right) + \theta_4(0,q^5) H(q), \label{mtc-2}\\
	F_0(q) - 1 &= q M\left(\mfrac 15, q^{5}\right) - q \psi(q^5)H(q^2), \label{mtc-3}\\
	F_1(q) &= q M\left(\mfrac 25, q^{5}\right) + \psi(q^5)G(q^2). \label{mtc-4}
\end{align}
Here
\begin{equation*}
	M(r,q) := \sum_{n=1}^\infty \frac{q^{n(n-1)}}{(q^r;q)_n(q^{1-r};q)_n},
\end{equation*}
the functions $\theta_4(0,q)$ and $\psi(q)$ are theta functions, and $G(q)$ and $H(q)$ are the Rogers-Ramanujan functions (see Section~\ref{sec:preliminaries} for definitions).
Andrews and Garvan \cite{andrews-garvan} showed that the mock theta conjectures fall naturally into two families of five identities each 
(according to Ramanujan's original grouping), and that within each family the truth of each of the identities implies the truth of the others via straightforward $q$-series manipulations.
Shortly thereafter Hickerson \cite{hickerson-proof} proved the mock theta conjectures by establishing the identities involving $f_0(q)$ and $f_1(q)$.
According to Gordon and McIntosh \cite[p.~106]{gordon-mcintosh-survey}, the mock theta conjectures together form ``one of the fundamental results of the theory of [mock theta functions]'' and Hickerson's proof is a ``tour de force.''

In his PhD thesis \cite{zwegers}, Zwegers showed that the mock theta functions can be completed to real analytic modular forms of weight 1/2 by multiplying by a suitable rational power of $q$ and adding nonholomorphic integrals of certain unary theta series of weight 3/2.
This allows the mock theta functions to be studied using the theory of harmonic Maass forms.
Bringmann, Ono, and Rhoades remark in \cite[p.~1087]{bor-eulerian} that their Theorem 1.1, together with the work of Zwegers, reduces the proof of the mock theta conjectures to ``the verification of two simple identities for classical weakly holomorphic modular forms.''
Zagier makes a similar comment in \cite[\S 6]{zagier-survey}.
Following their approach, Folsom \cite{folsom-mock-theta} reduced the proof of the $\chi_0(q)$ and $\chi_1(q)$ mock theta conjectures to the verification of two identities in the space of modular forms of weight $1/2$ for the subgroup $G=\Gamma_1(144\cdot 10^2 \cdot 5^4)$.
Since $[\SL_2(\Z):G]\geq 5\times 10^{13}$, this computation is currently infeasible.

The purpose of the present paper is to provide a conceptual, unified proof of the mock theta conjectures that relies neither on computational verification nor on the work Andrews and Garvan \cite{andrews-garvan}.
Our method proves four of the ten mock theta conjectures simultaneously; two from each family (namely the identities \eqref{mtc-1}--\eqref{mtc-4} above).
Four of the remaining six conjectures can be proved using the same method, and the remaining two follow easily from the others (see Section~\ref{sec:remaining}).

To accomplish our goal, we recast the mock theta conjectures in terms of an equality between two nonholomorphic vector-valued modular forms $\bm F$ and $\bm G$ of weight 1/2 on $\SL_2(\Z)$ which transform according to the Weil representation (see Lemma~\ref{lem:weil-rep} below),
and we show that the difference $\bm F-\bm G$ is holomorphic.
Employing a natural isomorphism between the space of such forms and the space $J_{1,60}$ of Jacobi forms of weight $1$ and index $60$, together with the result of Skoruppa that $J_{1,m}=\{0\}$, we conclude that $\bm F=\bm G$.

\section{Definitions and transformations} \label{sec:preliminaries}

In this section, we define the functions $M(\frac a5,q)$, $\theta_4(0,q)$, $\psi(q)$, $G(q)$, and $H(q)$ and describe the transformation behavior for these functions and the mock theta functions under the generators
\[
	T := \pmmatrix 1101 \quad \text{ and } \quad S := \pmmatrix 0{-1}10
\]
of $\SL_2(\Z)$.
We employ the usual $\sl_k$ notation, defined for $k\in \frac 12 \Z$ and $\gamma=\pmatrix abcd\in \SL_2(\Z)$ by
\[
	(f \sl_k \gamma )(z) := (cz+d)^{-k} f\left(\mfrac{az+b}{cz+d}\right).
\]
For $k\in \Z$ we have $f\sl_k AB=f\sl_k A\sl_k B$, but for general $k$ we have
\begin{gather} \label{eq:sl}
	f\sl_k A B = e^{i k \left(\arg j_A(Bi)+\arg j_B(i)-\arg j_{AB}(i) \right)} f\sl_k A\sl_k B,
\end{gather}
where $j_\gamma(z)=cz+d$ for $\gamma=\pmatrix **cd$ (see \cite[\S 2.6]{iwaniec}).
When $k\notin \Z$, we always take $\arg z \in (-\pi,\pi]$.
Much of the arithmetic here and throughout the paper takes place in the splitting field of the polynomial $x^4-5x^2+5$, which has roots
\begin{equation} \label{eq:al-be}
	\alpha := \sqrt{\mfrac{5-\sqrt 5}{2}} \quad \text{ and } \quad \beta := \sqrt{\mfrac{5+\sqrt 5}{2}}.
\end{equation}

We begin by giving the modular transformations satisfied by the mock theta functions $f_0$, $f_1$, $F_0$, and $F_1$ which are given in Section 4.4 of \cite{zwegers}.
The nonholomorphic completions are given in terms of the integral (see \cite[Proposition 4.2]{zwegers})
\[
	R_{a,b}(z) := -i \int_{-\bar z}^{i\infty} \frac{g_{a,-b}(\tau)}{\sqrt{-i(\tau+z)}} d\tau,
\]
where $g_{a,b}$ (see \cite[\S 1.5]{zwegers}) is the unary theta function
\[
	g_{a,b}(z) := \sum_{\nu\in a+\Z} \nu e^{\pi i \nu^2 z + 2\pi i \nu b}.
\]
We will simplify the components of $G_{5,1}(\tau)$ on page 75 of \cite{zwegers} by using the relation
\begin{align*}
	g_{a,0}(z) - g_{a+\frac 12,0}(z) &= \mfrac 12 e^{-2\pi i a} g_{2a,\frac 12}(z/4)
\end{align*}
and Proposition 1.15 of \cite{zwegers}.
As usual, $q:=\exp(2\pi iz)$.
We define
\begin{align}
	\widetilde f_0(z) &:= q^{-\frac1{60}} f_0(q) - \zeta_{10} \, \left(\zeta_{12}^{-1} \, R_{\frac 1{30},\frac 12} + \zeta_{12} \, R_{\frac {11}{30},\frac 12}\right)(30z), \label{eq:mt-complete-1} \\
	\widetilde f_1(z) &:= q^{\frac{11}{60}} f_1(q) - \zeta_{5} \left(\zeta_{12}^{-1} \, R_{\frac {7}{30},\frac 12} + \zeta_{12} \, R_{\frac {17}{30},\frac 12}\right)(30z), \label{eq:mt-complete-2} \\
	\widetilde F_0(z) &:= q^{-\frac 1{120}} (F_0(q)-1) + \mfrac 12 \, \zeta_{10} \left( \zeta_{12}^{-1} \, R_{\frac{1}{30},\frac 12} + \zeta_{12} \, R_{\frac{11}{30},\frac 12} \right)(15z), \label{eq:mt-complete-3} \\
	\widetilde F_1(z) &:= q^{\frac {71}{120}} F_1(q) + \mfrac 12 \, \zeta_5 \left( \zeta_{12}^{-1} \, R_{\frac{7}{30},\frac 12} + \zeta_{12} R_{\frac{17}{30},\frac 12} \right)(15z). \label{eq:mt-complete-4}
\end{align}
The following is Proposition 4.10 of \cite{zwegers}.
The vector \eqref{eq:vec-F-def} below equals the vector $F_{5,1}(\tau)-G_{5,1}(\tau)$ of that paper (some computation is required to see this for the fifth and sixth components).

\begin{proposition} \label{prop:F-transformation}
	The vector
	\begin{equation} \label{eq:vec-F-def}
		\bm F(z) := \left( \ \til f_0(z), \ \til f_1(z), \ \til F_0\left(\tfrac z2\right), \ \til F_1\left(\tfrac z2\right), \ \zeta_{240} \, \til F_0\left(\tfrac {z+1}2\right), \ \zeta_{240}^{-71}\,\til F_1\left(\tfrac {z+1}2\right) \ \right)^{\intercal}
	\end{equation}
	satisfies the transformations
	\begin{equation*}
		\bm F \sl_{\frac 12} T = M_T \, \bm F \quad \text{ and } \quad \bm F \sl_{\frac 12} S = \zeta_8^{-1} \sqrt{\mfrac 25} \, M_S \, \bm F,
	\end{equation*}
	where
	\begin{equation*}
	\def\arraystretch{1.2}
	\arraycolsep=.2em
	M_T = 
	\left(
	\begin{array}{cccccc}
		\zeta_{60}^{-1} & 0 & 0 & 0 & 0 & 0 \\
		0 & \zeta_{60}^{11} & 0 & 0 & 0 & 0 \\
		0 & 0 & 0 & 0 & \zeta_{240}^{-1} & 0 \\
		0 & 0 & 0 & 0 & 0 & \zeta_{240}^{71} \\
		0 & 0 & \zeta_{240}^{-1} & 0 & 0 & 0 \\
		0 & 0 & 0 & \zeta_{240}^{71} & 0 & 0
	\end{array}
	\right)
	\quad \text{ and } \quad 
	\arraycolsep=.35em
	M_S =  \left(
	\begin{array}{cccccc}
		0 & 0 & \alpha & \beta & 0 & 0 \\
		0 & 0 & \beta & -\alpha & 0 & 0 \\
		\frac \alpha2  & \frac \beta2 & 0 & 0 & 0 & 0 \\
		\frac \beta2   & -\frac \alpha2 & 0 & 0 & 0 & 0 \\
		0 & 0 & 0 & 0 & \frac \beta{\sqrt 2} & \frac \alpha{\sqrt 2} \\
		0 & 0 & 0 & 0 & \frac \alpha{\sqrt 2} & -\frac \beta{\sqrt 2}
	\end{array}
	\right).
	\end{equation*}
\end{proposition}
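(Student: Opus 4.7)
The statement is a repackaging of Proposition~4.10 of \cite{zwegers} into a single six-dimensional vector; accordingly my plan is to identify $\bm F$ with Zwegers' vector $F_{5,1}(\tau) - G_{5,1}(\tau)$ and then import the transformations directly.

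For entries one through four the identification is immediate from the definitions \eqref{eq:mt-complete-1}--\eqref{eq:mt-complete-4}. Entries five and six require the computation flagged in the excerpt: Zwegers' $G_{5,1}(\tau)$ is expressed via unary theta functions of the form $g_{a,0} - g_{a+1/2,0}$, while the definitions used here are built from $R_{a,1/2}$ integrals. The bridge is the reduction
\[
g_{a,0}(z) - g_{a+\frac12,0}(z) = \mfrac12 e^{-2\pi i a}\, g_{2a,\frac12}(z/4),
\]
which rewrites a difference of two $g_{\cdot,0}$ as a single $g_{\cdot,1/2}$ after rescaling. Combined with the half-shift $z \mapsto (z+1)/2$, the normalizing cyclotomic factors $\zeta_{240}$ and $\zeta_{240}^{-71}$, and Proposition~1.15 of \cite{zwegers} for the modular transformations of $g_{a,b}$, this matches entries five and six of $\bm F$ with the corresponding components of $F_{5,1}(\tau) - G_{5,1}(\tau)$.

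Granting the identification, $\bm F \sl_{\frac12} T = M_T \bm F$ follows componentwise. For rows one and two the diagonal scalars $\zeta_{60}^{-1}$ and $\zeta_{60}^{11}$ arise from the holomorphic prefactors $q^{-1/60}$ and $q^{11/60}$, and the non-holomorphic completions transform compatibly via Proposition~1.15 of \cite{zwegers}. The off-diagonal $2\times 2$ blocks pairing rows $\{3,5\}$ and $\{4,6\}$ arise because $T$ sends entry $3$, namely $\widetilde F_0(z/2)$, to $\widetilde F_0((z+1)/2)$, which by definition is $\zeta_{240}^{-1}$ times entry $5$; similarly for entries $4$ and $6$. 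That entry~$5$ of $\bm F(z+1)$ returns to $\zeta_{240}^{-1}$ times entry~$3$ of $\bm F(z)$ then reduces to the periodicity $\widetilde F_0(w+1) = \zeta_{120}^{-1}\widetilde F_0(w)$ (and its analogue for $\widetilde F_1$), which follows in turn from Proposition~1.15 of \cite{zwegers}.

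The transformation $\bm F \sl_{\frac12} S = \zeta_8^{-1}\sqrt{2/5}\, M_S \bm F$ is read off from Zwegers' Proposition~4.10; the scalar $\zeta_8^{-1}\sqrt{2/5}$ is standard for the $S$-action on a weight-$\frac12$ unary theta function at conductor dividing $20$, while the entries $\alpha$, $\beta$ of $M_S$ arise from Gauss sums at level $5$ (equivalently, from the evaluations $2\sin(\pi/5) = \alpha$ and $2\sin(2\pi/5) = \beta$). The genuine obstacle is the fifth- and sixth-component identification above: one must simultaneously track the half-argument shift, the $\zeta_{240}$ factor, the factor-of-four rescaling in the $g$-reduction identity, and the transformations of $R_{a,1/2}$ under halving and shifting of the argument, in order to confirm that $\bm F$ really coincides with $F_{5,1}(\tau) - G_{5,1}(\tau)$.
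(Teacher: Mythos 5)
Your proposal takes essentially the same route as the paper: the paper likewise proves this proposition by identifying $\bm F$ with Zwegers' vector $F_{5,1}(\tau)-G_{5,1}(\tau)$, simplifying the fifth and sixth components via the identity $g_{a,0}(z)-g_{a+\frac12,0}(z)=\frac12 e^{-2\pi i a}g_{2a,\frac12}(z/4)$ together with Proposition~1.15 of Zwegers, and then importing Proposition~4.10 of that thesis. Your componentwise reading of $M_T$ and the identification of $\alpha$, $\beta$ with $2\sin(\pi/5)$, $2\sin(2\pi/5)$ are correct, and your flagged ``genuine obstacle'' (the bookkeeping for components five and six) is exactly the computation the paper also leaves to the reader.
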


Next we define the functions on the right-hand side of \eqref{mtc-1}--\eqref{mtc-4} and give their transformation properties.
Following \cite{bringmann-ono-dyson,gordon-mcintosh-57}, we define, for $a\in \{1,2,3,4\}$, the functions
\begin{align}
	M\left(\mfrac a5,z\right) &:= \sum_{n=1}^\infty \frac{q^{n(n-1)}}{(q^{\frac a5};q)_n(q^{1-\frac a5};q)_n},\\
	N\left(\mfrac a5,z\right) &:= 1 + \sum_{n=1}^\infty \frac{q^{n^2}}{(\zeta_5^a q;q)_n(\zeta_5^{-a}q;q)_n}. \label{eq:N-def}
\end{align}
Clearly we have $M(1-\frac a5,z)=M(\frac a5,z)$ and $N(1-\frac a5,z)=N(\frac a5,z)$.
Bringmann and Ono \cite{bringmann-ono-dyson} also define auxiliary functions $M(a,b,5,z)$ and $N(a,b,5,z)$ for $0\leq a\leq 4$ and $1\leq b \leq 4$.
Together, the completed versions of these functions form a set that is closed (up to multiplication by roots of unity) under the action of $\SL_2(\Z)$ (see \cite[Theorem 3.4]{bringmann-ono-dyson}).
Garvan \cite{garvan} made the definitions of these functions and their transformations more explicit, so in what follows we reference his paper.

The nonholomorphic completions for $M(\frac a5,z)$ and $N(\frac a5,z)$ are given in terms of integrals of weight 3/2 theta functions $\Theta_1(\frac a5,z)$ and $\Theta_1(0,-a,5,z)$ (defined in Section 2 of \cite{garvan}).
A straightforward computation shows that
\begin{align*}
	\Theta_1(0,-a,5,z) = 15\sqrt{3} \, \zeta_{10}^a \left( \zeta_{12}^{-1} \, g_{\frac{6a-5}{30},-\frac 12}(3z) + \zeta_{12} \, g_{\frac{6a+5}{30},-\frac 12}(3z) \right).
\end{align*}
Following (2.1), (2.2), (3.5), and (3.6) of \cite{garvan}, we define
\begin{align}
	\til M\left(\mfrac a5,z\right) &:= 2q^{\frac{3a}{10}\left(1-\frac a5\right)-\frac 1{24}} \, M\left(\mfrac a5, z\right) + \zeta_{10}^{a} \left( \zeta_{12}^{-1} R_{\frac{6a-5}{30},\frac 12} + \zeta_{12} R_{\frac{6a+5}{30},\frac 12} \right)(3z), \label{eq:mtil-def} \\
	\til N\left(\mfrac a5,z\right) &:= \csc \left(\mfrac {a\pi}5\right) q^{-\frac1{24}} N\left(\mfrac a5, z\right) 
	+\mfrac i{\sqrt 3} \int_{-\bar z}^{i\infty} \frac{\Theta_1(\frac a5,\tau)}{\sqrt{-i(\tau+z)}} \, d\tau. 
	\label{eq:ntil-def}
\end{align}
The completed functions $\til M(a,b,z):=\mathcal G_2(a,b,5;z)$ and $\til N(a,b,z):=\mathcal G_1(a,b,5;z)$ are defined in (3.7) and (3.8) of that paper. 
By Theorems 3.1 and 3.2 of \cite{garvan} we have
\begin{align}
	\til M\left(\mfrac a5,z\right) \sl_{\frac 12} T^5 &= \til M\left(\mfrac a5,z\right) \times 
	\begin{cases}
		\zeta_{120}^{-1} & \text{ if }a=1, \\
		\zeta_{120}^{71} & \text{ if }a=2,
	\end{cases} \label{eq:M-T}\\
	\til N\left(\mfrac a5,z\right) \sl_{\frac 12} T &= \zeta_{24}^{-1}\til N\left(\mfrac a5,z\right),
\end{align}
and
\begin{align}	\label{eq:M-S}
	\til M\left(\mfrac a5,z\right) \sl_{\frac 12} S = \zeta_8^{-1} \til N\left(\mfrac a5,z\right).
\end{align}

The theta functions $\theta_4(0,q)$ and $\psi(q)$ are defined by
\begin{align*}
	\theta_4(0,q) &:= \frac{(q;q)_\infty^2}{(q^2;q^2)_\infty} = \frac{\eta^2(z)}{\eta(2z)} \\
	\psi(q) &:= \frac{(q^2;q^2)^2_\infty}{(q;q)_\infty} = q^{-\frac 18}\frac{\eta^2(2z)}{\eta(z)},
\end{align*}
where $\eta(z) = q^{1/24}(q;q)_\infty$ is the Dedekind eta function.
The transformation properties of these functions are easily obtained using the well-known transformation 
\begin{equation} \label{eq:eta-S}
	\eta(-1/z) = \sqrt{-i z} \, \eta(z).
\end{equation}

The Rogers-Ramanujan functions are defined by
\begin{align*}
	G(q) &:= \frac{1}{(q;q^5)_\infty (q^4;q^5)_\infty}, \\
	H(q) &:= \frac{1}{(q^2;q^5)_\infty (q^3;q^5)_\infty}.
\end{align*}
It will be more convenient for us to use the functions
\begin{equation} \label{eq:gh-def}
 	g(z) := q^{-\frac 1{60}} G(q) \quad \text{ and } \quad h(z) := q^{\frac{11}{60}} H(q).
 \end{equation} 
They satisfy the transformations (see \cite[p. 207]{gordon-mcintosh-57})
\begin{align}
	g \sl_0 S &= \alpha^{-1} g + \beta^{-1} h, \label{eq:g-S} \\
	h \sl_0 S &= \beta^{-1} g - \alpha^{-1} h. \label{eq:h-S}
\end{align}

Using the completed functions, the mock theta conjectures \eqref{mtc-1}--\eqref{mtc-4} are implied by the corresponding completed versions:
\begin{align}
	\til f_0(z) &= - \til M\left(\mfrac 15, 10z\right) + \frac{\eta^2(5z)}{\eta(10z)} g(z), \label{mtc-1a} \\
	\til f_1(z) &= - \til M\left(\mfrac 25, 10z\right) + \frac{\eta^2(5z)}{\eta(10z)} h(z), \label{mtc-2a}\\
	\til F_0(z) &= \mfrac 12 \, \til M\left(\mfrac 15, 5z\right) - \frac{\eta^2(10z)}{\eta(5z)}h(2z), \label{mtc-3a}\\
	\til F_1(z) &= \mfrac 12 \, \til M\left(\mfrac 25, 5z\right) + \frac{\eta^2(10z)}{\eta(5z)}g(2z). \label{mtc-4a}
\end{align}
Motivated by \eqref{eq:vec-F-def} and \eqref{mtc-1a}--\eqref{mtc-4a}, we define the vector
\begin{equation} \label{eq:vec-G-def}
 \def\arraystretch{2}
	\bm{G}(z) :=  
	\left(  
	\begin{array}{c} 
	      - \til M\left(\mfrac 15, 10z\right) + \dfrac{\eta^2(5z)}{\eta(10z)} g(z) \\
	      - \til M\left(\mfrac 25, 10z\right) + \dfrac{\eta^2(5z)}{\eta(10z)} h(z) \\
	      \tfrac 12 \, \til M\left(\mfrac 15, \mfrac{5z}2\right) - \dfrac{\eta^2(5z)}{\eta(\frac{5z}2)} h(z)  \\
	      \tfrac 12 \, \til M\left(\mfrac 25, \mfrac{5z}2\right) + \dfrac{\eta^2(5z)}{\eta(\frac{5z}2)} g(z) \\
	      \tfrac 12 \, \zeta_{240} \, \til M\left(\mfrac 15, \mfrac{5z+5}2\right) - \zeta_{48}^{25}\dfrac{\eta^2(5z)}{\eta(\frac{5z+1}2)} h(z) \\
	      \tfrac 12 \, \zeta_{240}^{-71} \til M\left(\mfrac 25, \mfrac{5z+5}2\right) + \zeta_{48} \dfrac{\eta^2(5z)}{\eta(\frac{5z+1}2)} g(z)
	   \end{array}
	\right),
\end{equation}
where we have used $\eta(z+1)=\zeta_{24}\eta(z)$, $g(z+1)=\zeta_{60}^{-1}g(z)$, and $h(z+1)=\zeta_{60}^{11}h(z)$ to simplify the second terms of the fifth and sixth components.

To prove that $\bm F=\bm G$ we begin by showing that they transform in the same way.

\begin{proposition} \label{prop:G-transformation}
	The vector $\bm G(z)$ defined in \eqref{eq:vec-G-def} satisfies the transformations
	\begin{equation}
		\bm G \sl_{\frac 12} T = M_T \, \bm G \quad \text{ and } \quad \bm G \sl_{\frac 12} S = \zeta_8^{-1} \sqrt{\mfrac 25} \, M_S \, \bm G,
	\end{equation}
	where $M_T$ and $M_S$ are as in Proposition~\ref{prop:F-transformation}.
\end{proposition}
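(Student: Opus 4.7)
The plan is to verify both matrix equations by a direct, componentwise computation using the transformation formulas collected in this section.

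For the $T$-transformation, we substitute $z\mapsto z+1$ in each component of $\bm G$ and invoke \eqref{eq:M-T}, $\eta(z+1)=\zeta_{24}\eta(z)$, $g(z+1)=\zeta_{60}^{-1}g(z)$, and $h(z+1)=\zeta_{60}^{11}h(z)$. In components 1 and 2 the argument $10z$ shifts by $10$ (two applications of $T^5$), so the $\widetilde M$-term picks up $\zeta_{120}^{-2}=\zeta_{60}^{-1}$ or $\zeta_{120}^{142}=\zeta_{60}^{11}$, the eta quotient $\eta^2(5z)/\eta(10z)$ is $T$-invariant, and the $g,h$-factors contribute matching scalars; this yields the $(1,1)$ and $(2,2)$ entries of $M_T$. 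In components 3--6 the $T$-action exchanges the arguments $5z/2$ and $(5z+5)/2$, so these components permute pairwise ($3\leftrightarrow 5$ and $4\leftrightarrow 6$) as in $M_T$; the off-diagonal scalars $\zeta_{240}^{\pm 1}$ and $\zeta_{240}^{\pm 71}$ fall out of collecting the combined root-of-unity contributions from $\widetilde M, \eta, g, h$, together with the prefactors $\zeta_{240}$ and $\zeta_{240}^{-71}$ built into the definition of $\bm G$.

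For the $S$-transformation, we apply $z\mapsto -1/z$ and multiply by $z^{-1/2}$ in each component. Each $\widetilde M$-term transforms via \eqref{eq:M-S} in the rescaled form
\[
\widetilde M\left(\mfrac{a}{5}, -\mfrac{N}{z}\right) = \left(\mfrac{z}{N}\right)^{1/2}\zeta_8^{-1}\,\widetilde N\left(\mfrac{a}{5}, \mfrac{z}{N}\right),
\]
producing a $\widetilde N$-term at the dually scaled argument with an overall $\zeta_8^{-1} N^{-1/2}$ factor. The eta quotients transform by \eqref{eq:eta-S}, and the various $\sqrt{-iz/N}$ factors, combined with $z^{-1/2}$, produce the global prefactor $\zeta_8^{-1}\sqrt{2/5}$ together with the eta quotient of the target component. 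The Rogers--Ramanujan functions transform by \eqref{eq:g-S}--\eqref{eq:h-S}, introducing $\alpha^{-1}$ and $\beta^{-1}$ which, after being multiplied by $\sqrt{2/5}$ and using $\alpha\beta=\sqrt 5$, reproduce the $\alpha, \beta$ entries of $M_S$. The final step is to match the $\widetilde N$-terms generated on the left against the $\widetilde M$-terms of $M_S\bm G$ on the right; this comes from the rescaled $S$-identity for the Appell--Lerch sums underlying $\widetilde M$ and $\widetilde N$, available from the framework of \cite{garvan}.

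The main obstacle is the $S$-transformation: balancing the $\sqrt{2/5}$-factor, the algebraic numbers $\alpha$ and $\beta$, the various rescalings of $z$, and the many roots of unity so that the ``mock-modular'' ($\widetilde M/\widetilde N$) contributions and the ``holomorphic'' (eta quotient times $g$ or $h$) contributions match simultaneously on each side. Given the transformation formulas already assembled, however, this is a long but essentially mechanical verification.
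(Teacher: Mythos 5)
Your $T$-verification is correct and matches the paper's one-line treatment. The $S$-part, however, has a genuine gap at exactly the step you label ``essentially mechanical.'' Applying your rescaled form of \eqref{eq:M-S} to the first component produces $\til N\bigl(\mfrac 15,\mfrac z{10}\bigr)$, whereas the target $\zeta_8^{-1}\sqrt{2/5}\,(\alpha\,G_3+\beta\,G_4)$ contains $\til M\bigl(\mfrac 15,\mfrac{5z}2\bigr)$ and $\til M\bigl(\mfrac 25,\mfrac{5z}2\bigr)$ together with eta-quotient terms. There is no ``rescaled $S$-identity'' in Garvan's framework that converts one into the other: \eqref{eq:M-S} and its relatives describe the $\SL_2(\Z)$-action on the family $\{\til M,\til N\}$ at a fixed argument, while here the arguments of $\til N$ and $\til M$ differ by a genuine rescaling ($z$ versus $25z$ after normalizing). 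The required conversion is precisely Lemma~\ref{lem:n1-identity} (and its Galois conjugate, Lemma~\ref{lem:n2-identity}),
\[
	\til N\left(\mfrac 15, z\right) + \alpha \, \til M \left(\mfrac 15, 25z\right) + \beta \, \til M \left(\mfrac 25, 25z\right) = \text{(explicit eta-quotient terms)},
\]
an identity equivalent to (3.8)--(3.9) of Gordon--McIntosh. It is not a consequence of transformation bookkeeping: it asserts that the nonholomorphic completion terms cancel and that the surviving holomorphic parts agree, and its proof occupies all of Section~\ref{sec:lem-proof} (orders at the $24$ cusps of $\Gamma_0(50)\cap\Gamma_1(5)$ plus a valence-bound comparison of $16$ Fourier coefficients). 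Your proposal implicitly assumes this ingredient is available ``from the framework of \cite{garvan}''; it is not, and without it the $\til N$-terms generated on the left cannot be matched to the $\til M$-terms on the right, so the verification cannot close.

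A second, smaller omission: for the fifth and sixth components the argument $\mfrac{5z+5}2$ is not of the form $Nz$, so your rescaled $S$-formula does not apply to them at all. The paper handles these via the factorization $\pmmatrix 1502 \pmmatrix 0{-1}10 = T^3ST^2S\pmmatrix 1102$, chasing $\til M\bigl(\mfrac 15,z\bigr)$ through Garvan's auxiliary functions $\til M(a,b,z)$, $\til N(a,b,z)$ (his Theorems 3.1, 3.2 and (4.12)) to obtain \eqref{eq:G5-1}, and then invoking Lemma~\ref{lem:n2-identity} at the shifted argument $\mfrac z{10}+\mfrac 12$ together with \eqref{eq:M-T}. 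Also note the paper does not verify $G_3$ and $G_4$ directly: it deduces them from the $G_1,G_2$ cases using $f\sl_{\frac 12}S\sl_{\frac 12}S=-if$ from \eqref{eq:sl}, a trick worth adopting since it halves the work. The essential missing idea in your outline, to repeat, is the pair of rescaling identities of Lemmas~\ref{lem:n1-identity}--\ref{lem:n2-identity}, which carry the real content of the proposition.
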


Before proving this proposition, we require two identities that will be indispensable in the proof.
Equivalent identities can be found in \cite[(3.8) and (3.9)]{gordon-mcintosh-57}, where they are proved using $q$-series methods.
In Section~\ref{sec:lem-proof} we provide a purely modular proof.

\begin{lemma} \label{lem:n1-identity}
	Let $\alpha$ and $\beta$ be as in \eqref{eq:al-be}. Then
	\begin{multline} \label{eq:n1-identity}
		\til N\left(\mfrac 15, z\right) + \alpha \, \til M \left(\mfrac 15, 25z\right) + \beta \, \til M \left(\mfrac 25, 25z\right) \\
		= 2 \frac{\eta^2(2z)}{\eta(z)} \left( \alpha^{-1} g(10z) + \beta^{-1} h(10z) \right)  - 2 \, \frac{\eta^2(50z)}{\eta(25z)} \left( \beta \, g(10z) - \alpha \, h(10z) \right).
	\end{multline}
\end{lemma}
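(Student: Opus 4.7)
The plan is to set $D(z)$ equal to the difference of the two sides of \eqref{eq:n1-identity} and show that $D\equiv 0$ by realizing it as a component of a holomorphic vector-valued modular form of weight $1/2$ lying in a zero-dimensional space, mirroring on a smaller scale the strategy used for the main theorem.

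First I would verify that the nonholomorphic contributions to $D$ cancel. The right-hand side is manifestly holomorphic, so it suffices to show that the three shadows---from $\til N\bigl(\mfrac 15,z\bigr)$ and from $\alpha\til M\bigl(\mfrac 15,25z\bigr)$, $\beta\til M\bigl(\mfrac 25,25z\bigr)$---sum to zero. Using the explicit formula for $\Theta_1(0,-a,5,z)$ recalled in Section~\ref{sec:preliminaries} (and the analogous one for $\Theta_1(\mfrac a5,z)$) as a combination of the unary theta series $g_{a,b}$, and rescaling the Eichler integral that completes $\til N$ via $\tau\mapsto 25\tau$, each shadow becomes an $R$-integral of the type appearing in \eqref{eq:mtil-def}. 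The algebraic identities $\alpha\beta=\sqrt 5$ and $\alpha^2+\beta^2=5$ from \eqref{eq:al-be}, together with the standard linear relations among $g_{a,b}$'s of proportional indices, should then force the weighted sum of shadows to cancel precisely with the coefficients $1,\alpha,\beta$.

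Next I would assemble $D$ together with its natural translates and $S$-transforms into a vector-valued holomorphic function $\bm D$ of weight $1/2$ transforming according to a suitable Weil representation of $\SL_2(\Z)$, using \eqref{eq:M-T}, \eqref{eq:M-S}, \eqref{eq:g-S}, \eqref{eq:h-S}, and \eqref{eq:eta-S} to verify automorphy on the generators $T$ and $S$. An inspection of the $q$-exponents coming from \eqref{eq:mtil-def}, \eqref{eq:ntil-def}, \eqref{eq:gh-def}, and the eta factors shows that $\bm D$ is holomorphic at every cusp. Appealing to Skoruppa's theorem $J_{1,m}=\{0\}$ via the isomorphism between holomorphic Jacobi forms of weight $1$ and Weil-representation vector-valued forms---or alternatively to Serre--Stark together with a direct comparison of a bounded number of Fourier coefficients---then forces $\bm D=0$, whence $D=0$.

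I expect the main obstacle to be the shadow cancellation in the first step. The nontrivial point is the exact identification between the $\Theta_1\bigl(\mfrac 15,z\bigr)$ that completes $\til N$ and the rescaled theta series $g_{(6a\pm 5)/30,-\frac 12}(75z)$ that complete $\til M\bigl(\mfrac a5,25z\bigr)$, and the careful tracking of the factors $\zeta_{10}^a$, $\zeta_{12}^{\pm 1}$, and $\sqrt 3$ through the $25$-fold rescaling of the integration variable. Once holomorphy of $D$ is established, the remaining modular bookkeeping and the vanishing argument are essentially routine given the machinery developed in Section~\ref{sec:preliminaries}.
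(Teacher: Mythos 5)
Your step 1 is sound in substance: the cancellation of the nonholomorphic parts of $\til N\left(\mfrac 15,z\right)+\alpha\,\til M\left(\mfrac 15,25z\right)+\beta\,\til M\left(\mfrac 25,25z\right)$ is precisely the content of Theorem 5.1 of \cite{garvan}, which the paper cites rather than re-proves, and your proposed derivation via the $5$-dissection $g_{a,b}(z)=5\sum_{k=0}^{4}g_{\frac{a+k}{5},5b}(25z)$ is workable. The genuine gap is your final step. You never identify the representation under which $\bm D$ is supposed to transform, and this is not a formality: spaces of holomorphic weight-$1/2$ vector-valued forms are not generically zero --- by Serre--Stark they are spanned by unary theta series ($\eta(z)$ itself spans such a space) --- so citing Skoruppa's $J_{1,m}=\{0\}$ requires exhibiting a specific Weil representation together with the symmetry type that makes the Eichler--Zagier isomorphism apply. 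In the main theorem this is exactly the computer-verified content of Lemma~\ref{lem:weil-rep}, where the antisymmetric combinations $\mathfrak e_h-\mathfrak e_{-h}$ are what exclude theta series. For your $D$ the paper's representation cannot work: since $D(z+1)=\zeta_{24}^{-1}D(z)$, compatibility with \eqref{eq:rho-T} would force $h^2\equiv 10\pmod{240}$, which is impossible since $10$ is not a square modulo $16$. You would therefore need a different lattice, and --- worse --- the $\SL_2(\Z)$-orbit of $D$ is large: $\til M\left(\mfrac a5,25z\right)\sl_{\frac 12}S$ is a multiple of $\til N\left(\mfrac a5,\mfrac z{25}\right)$, whose $T$-translates $\til N\left(\mfrac a5,\mfrac{z+j}{25}\right)$ drag in Garvan's auxiliary functions $\til M(a,b,z)$ and $\til N(a,b,z)$. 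Closing this orbit into a finite vector with an identifiable Weil action amounts to establishing a family of companion identities of essentially the same depth as \eqref{eq:n1-identity} itself; this near-circularity is presumably why the paper does not argue this way --- Lemma~\ref{lem:n1-identity} is the \emph{input} that makes the vector-valued treatment of $\bm G$ in Proposition~\ref{prop:G-transformation} possible, not an output of it.

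The paper's actual proof is scalar and classical: multiply both sides by $\eta(z)$, show both products lie in $M_1(\Gamma)$ for $\Gamma=\Gamma_0(50)\cap\Gamma_1(5)$ (via Garvan's Theorem 5.1 and the eta quotient $\eta(z)/\eta(25z)$ for the left side; via Biagioli's transformations of $g$, $h$ and explicit $v_\eta^2$ computations for the right side), verify holomorphy at the $24$ cusps in \eqref{eq:cusps}, and then compare the first $16$ Fourier coefficients against the valence bound $\frac 1{12}[\SL_2(\Z):\Gamma]=15$. Note that $M_1(\Gamma)\neq\{0\}$, so even after all the modularity work a finite coefficient comparison is unavoidable; your hope for a purely structural vanishing with no coefficient check is therefore optimistic. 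Relatedly, your claim that holomorphy at the cusps follows from ``an inspection of the $q$-exponents'' understates the real work: individual terms have negative orders ($\til N\left(\mfrac 15,z\right)$ has order $-\mfrac 1{24}$ at $\infty$ by \eqref{eq:n-ord-1}, and $g$, $h$ have order $-\mfrac 1{60}$ at various cusps by \eqref{eq:g-cusp}--\eqref{eq:h-cusp}), so holomorphy of the difference at $\infty$ already requires checking that leading coefficients agree, and at the cusps $\mfrac{13}{50}$, $\mfrac 7{25}$, $\mfrac 9{25}$, where $\eta(z)/\eta(25z)$ has poles, one needs the explicit order computations with $\til M(3a\bmod 5,a,z)$ carried out in Section~\ref{sec:lem-proof}.
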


Before proving Lemma~\ref{lem:n1-identity} we deduce an immediate consequence.
Note that the right-hand side of \eqref{eq:n1-identity} is holomorphic; this implies that the non-holomorphic completion terms on the left-hand side sum to zero.
By \eqref{eq:N-def}, the coefficients of $N(\frac a5,z)$ lie in $\Q(\zeta_5+\zeta_5^{-1})=\Q(\sqrt 5)$, and the automorphism $\sigma=(\sqrt 5\mapsto-\sqrt 5)$ maps $N(\frac 15,z)$ to $N(\frac 25,z)$.
By \eqref{eq:ntil-def} and the fact that
\[
	\csc\left(\mfrac{\pi a}5\right) = 
	\begin{cases}
		2\alpha^{-1} & \text{ if }a=1, \\
		2\beta^{-1} & \text{ if }a=2,
	\end{cases}
\]
it follows that the coefficients of both sides of \eqref{eq:n1-identity} lie in $\Q(\alpha)$.
The Galois group of $\Q(\alpha)$ is cyclic of order 4, generated by $\tau=(\alpha\mapsto\beta,\beta\mapsto-\alpha)$.
Since $\sqrt 5=\alpha\beta$, we have $\tau\mid_{Q(\sqrt 5)} = \sigma$.
Applying $\tau$ to Lemma~\ref{lem:n1-identity} gives the following identity.

\begin{lemma} \label{lem:n2-identity}
	Let $\alpha$ and $\beta$ be as in \eqref{eq:al-be}. Then
	\begin{multline} \label{eq:n2-identity}
		\til N\left(\mfrac 25, z\right) + \beta \, \til M \left(\mfrac 15, 25z\right) - \alpha \, \til M \left(\mfrac 25, 25z\right) \\
		= 2 \frac{\eta^2(2z)}{\eta(z)} \left( \beta^{-1} g(10z) - \alpha^{-1} h(10z) \right) + 2 \, \frac{\eta^2(50z)}{\eta(25z)} \left( \alpha \, g(10z) + \beta \, h(10z) \right).
	\end{multline}
\end{lemma}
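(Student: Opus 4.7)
The plan is to follow the Galois-theoretic strategy outlined in the paragraph preceding the statement. Since the right-hand side of Lemma~\ref{lem:n1-identity} is holomorphic, the non-holomorphic completion terms on the left-hand side cancel identically, so \eqref{eq:n1-identity} is equivalent to a holomorphic identity between $q$-series whose Fourier coefficients lie in $\Q(\alpha)$. I would then apply the Galois automorphism $\tau\in\mathrm{Gal}(\Q(\alpha)/\Q)$ with $\tau(\alpha)=\beta$ and $\tau(\beta)=-\alpha$ to this holomorphic identity.

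The concrete bookkeeping is as follows: the functions $g$, $h$, $\eta$, and $M(\tfrac a5,\cdot)$ have rational Fourier coefficients and are fixed by $\tau$; the scalars $\alpha$ and $\beta$ transform by definition; and since $\tau|_{\Q(\sqrt 5)}=\sigma$, one has $\tau(\csc(\pi/5))=\tau(2\alpha^{-1})=2\beta^{-1}=\csc(2\pi/5)$ together with $\sigma(N(\tfrac 15,\cdot))=N(\tfrac 25,\cdot)$. Hence $\tau$ sends the holomorphic part of $\til N(\tfrac 15,z)$ to that of $\til N(\tfrac 25,z)$, and on holomorphic parts $\alpha\,\til M(\tfrac 15,25z)\mapsto\beta\,\til M(\tfrac 15,25z)$ and $\beta\,\til M(\tfrac 25,25z)\mapsto-\alpha\,\til M(\tfrac 25,25z)$. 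On the right-hand side, $\alpha^{-1}g+\beta^{-1}h\mapsto\beta^{-1}g-\alpha^{-1}h$ and $-(\beta g-\alpha h)\mapsto\alpha g+\beta h$, which together reproduce the right-hand side of \eqref{eq:n2-identity}.

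The main obstacle is to promote the resulting holomorphic identity to the completed statement \eqref{eq:n2-identity} by verifying that the non-holomorphic completion terms on the left-hand side of \eqref{eq:n2-identity} also cancel. I would handle this at the level of shadows: the shadow of $\til N(\tfrac a5,z)$ is proportional to $\Theta_1(\tfrac a5,z)$, and the shadow of $\til M(\tfrac a5,25z)$ is a specific combination of the unary theta series $g_{(6a\pm 5)/30,\,-1/2}$, accessible via the formula for $\Theta_1(0,-a,5,\cdot)$ recalled in the preamble to the lemmas. The non-holomorphic cancellation forced by \eqref{eq:n1-identity} is equivalent to a weight-$3/2$ theta identity whose coefficients live in $\Q(\alpha)$ once the $\zeta_{10}^a\zeta_{12}^{\pm 1}$ prefactors are absorbed into the intrinsic phase $e^{-\pi i(6a\pm 5)/30}$ of the theta kernel; applying $\tau$ to this theta identity then yields the corresponding cancellation needed for \eqref{eq:n2-identity}, completing the proof.
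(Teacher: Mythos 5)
Your proposal is correct and takes essentially the same approach as the paper: the paper likewise notes that holomorphy of the right-hand side of \eqref{eq:n1-identity} forces the nonholomorphic completion terms to cancel, observes that both sides then have coefficients in $\Q(\alpha)$, and deduces Lemma~\ref{lem:n2-identity} by applying $\tau=(\alpha\mapsto\beta,\ \beta\mapsto-\alpha)$, whose restriction to $\Q(\sqrt 5)$ is $\sigma$. Your extra check that the $\zeta_{10}^a\zeta_{12}^{\pm 1}$ prefactors cancel against the intrinsic phases of the theta kernels --- so that the shadow identity also has coefficients in $\Q(\alpha)$ and transports under $\tau$, yielding the completion-term cancellation for \eqref{eq:n2-identity} --- is a correct verification of a detail the paper leaves implicit.
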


\begin{proof}[Proof of Proposition~\ref{prop:G-transformation}]
	The transformation with respect to $T$ follows immediately from \eqref{eq:M-T}.

	Let $G_j(z)$ denote the $j$-th component of $\bm G(z)$.
	By \eqref{eq:M-S}, \eqref{eq:eta-S}, and \eqref{eq:g-S} we have
	\begin{align*}
		G_1(z) \sl_{\frac 12} S 
		&= \zeta_8^{-1} \sqrt{\mfrac 25} \, \left(-\mfrac 12 \, \til N\left(\mfrac 15,\mfrac z{10}\right) + \frac{\eta^2(\frac z5)}{\eta(\frac z{10})} \big(\alpha^{-1}g(z) + \beta^{-1}h(z) \big) \right) \\
		&= \zeta_8^{-1} \sqrt{\mfrac 25} \, \big(\alpha \, G_3(z) + \beta \, G_4(z) \big),
	\end{align*}
	where we used Lemma~\ref{lem:n1-identity} with $z$ replaced by $\frac z{10}$ in the second line.
	For $G_2$, the situation is analogous, using Lemma~\ref{lem:n2-identity}.
	For $G_3$ and $G_4$ we note that the transformations for $G_1$ and $G_2$ imply that
	\begin{align*}
		G_3 = \zeta_{8} \sqrt{\mfrac 25} \, \left(\mfrac \alpha2 \, G_1 \sl_{\frac 12} S + \mfrac \beta2 \, G_2 \sl_{\frac 12}S\right), \\
		G_4 = \zeta_{8} \sqrt{\mfrac 25} \, \left(\mfrac \beta2 \, G_1 \sl_{\frac 12} S - \mfrac \alpha2 \, G_2 \sl_{\frac 12}S\right).
	\end{align*}
	By \eqref{eq:sl} we have $f\sl_{\frac 12}S\sl_{\frac 12}S=f\sl_{\frac 12} (-I) = -i f$, and we obtain the transformations for $G_3$ and $G_4$.

	For $G_5$, we first observe that
	\[
		\pmmatrix 1502 \pmmatrix 0{-1}10 = T^3 S T^2 S \pmmatrix 1102.
	\]
	Using Theorems 3.1 and 3.2 of \cite{garvan} we compute
	\begin{align*}
		\til M\left(\mfrac 15,z\right) \sl_{\frac 12} T^3 S T^2 S 
		&= (-\zeta_{50}^{-3}\zeta_{24}^{-1})^3 \til M(1,3,z) \sl_{\frac12} ST^2S
		= \zeta_{200}^{39} \zeta_8^{-1} \til N(1,3,z) \sl_{\frac12}T^2S \\
		&= \zeta_{30}^{-7} \til N(0,3,z) \sl_{\frac 12} S
		= \zeta_{120}^{77} \, \til M(0,3,z).
	\end{align*}
	By (4.12) of \cite{garvan} this equals $\zeta_{120}^{47} \, \til N(\frac 25,z)$, so we conclude that
	\begin{equation} \label{eq:G5-1}
		\til M\left(\mfrac 15,\mfrac{5z+5}2\right) \sl_{\frac 12} S = \zeta_{120}^{47} \,\mfrac{1}{\sqrt 5} \, \til N\left(\mfrac 25,\mfrac{z/5+1}2\right).
	\end{equation}
	Similarly, we have $\eta(\frac{z+1}{2})\sl_{\frac 12} S = \zeta_{8}^{-1} \, \eta(\frac{z+1}{2})$ which, together with \eqref{eq:eta-S} and \eqref{eq:h-S} gives
	\begin{equation} \label{eq:G5-2}
		\zeta_{48}^{25}\dfrac{\eta^2(5z)}{\eta(\frac{5z+1}2)} h(z) \sl_{\frac 12} S = \zeta_{48}^{19} \,\mfrac{1}{\sqrt 5} \, \dfrac{\eta^2(\frac z5)}{\eta(\frac z{10}+\frac 12)} \left( \beta^{-1} g(z) - \alpha^{-1} h(z) \right).
	\end{equation}
	Replacing $z$ by $\frac z{10}+\frac 12$ in Lemma \ref{lem:n2-identity} and using \eqref{eq:M-T} yields
	\begin{multline} \label{eq:G5-3}
		\til N\left(\mfrac 25,\mfrac z{10}+\mfrac 12\right) + \beta \zeta_{60}^{-1} \, \til M \left(\mfrac 15, \mfrac{5z+5}{2}\right) - \alpha \zeta_{60}^{11} \, \til M \left(\mfrac 25, \mfrac{5z+5}{2}\right) \\
		=2 \frac{\eta^2(\frac z5)}{\eta(\frac z{10}+\frac 12)} \left(\beta^{-1} g(z) - \alpha^{-1} h(z)\right) - 2 \frac{\eta^2(5z)}{\eta(\frac{5z+1}2)} \left(\alpha g(z) + \beta h(z)\right).
	\end{multline}
	Putting \eqref{eq:G5-1}, \eqref{eq:G5-2}, and \eqref{eq:G5-3} together we obtain
	\begin{align*}
		\mfrac 12 \zeta_{240} \, \til M&\left(\mfrac 15,\mfrac{5z+5}2\right) - \zeta_{48}^{25}\dfrac{\eta^2(5z)}{\eta(\frac{5z+1}2)} h(z) \sl_{\frac 12} S \\
		&= \frac{1}{\sqrt 5} \zeta_{8}^{-1} \left( \zeta_{240} \, \mfrac\beta2 \, \til M\left(\mfrac 15,\mfrac{5z+5}2\right) + \zeta_{240}^{-71} \, \mfrac\alpha2 \, \til M\left(\mfrac 25,\mfrac{5z+5}2\right) - \zeta_{48}^{25} \frac{\eta^2(5z)}{\eta(\frac{5z+1}{2})} \left(\alpha \, g(z) + \beta \, h(z)\right) \right) \\
		&= \frac{1}{\sqrt 5} \zeta_{8}^{-1}\left(\beta \, G_5(z) + \alpha \, G_6(z)\right).
	\end{align*}
	The transformation for $G_6(z)$ is similarly obtained by using Lemma 3.
	\end{proof}

\section{Vector-valued modular forms and the Weil representation} \label{sec:weil-rep}

In this section we define vector-valued modular forms which transform according to the Weil representation, and we construct such a form from the components of $\bm F - \bm G$.
A good reference for this material is \cite[\S 1.1]{bruinier}.

Let $L=\Z$ be the lattice with associated bilinear form $(x,y)=-120xy$ and quadratic form $q(x)=-60x^2$. 
The dual lattice is $L'=\frac 1{120}\Z$. 
Let $\{\mathfrak e_h: \frac{h}{120}\in\frac1{120}\Z/\Z\}$ denote the standard basis for $\C[L'/L]$.
Let $\Mp_2(\R)$ denote the metaplectic two-fold cover of $\SL_2(\R)$; the elements of this group are pairs $(M,\phi)$, where $M=\pmatrix abcd\in \SL_2(\R)$ and $\phi^2(z)=cz+d$.
Let $\Mp_2(\Z)$ denote the inverse image of $\SL_2(\Z)$ under the covering map; this group is generated by
\[
	\til T := (T,1) \quad \text{ and } \quad \til S := (S,\sqrt z).
\]
The Weil representation can be defined by its action on these generators, namely
\begin{align}
	\rho_L(T,1)\mathfrak e_h &:= \zeta_{240}^{-h^2} \mathfrak e_h, \label{eq:rho-T} \\
	\rho_L(S,\sqrt z)\mathfrak e_h &:= \frac{1}{\sqrt{-120i}} \sum_{h'(120)} \zeta_{120}^{hh'} \, \mathfrak e_{h'}. \label{eq:rho-S}
\end{align}

A holomorphic function $\mathcal F:\H\to \C[L'/L]$ is a vector-valued modular form of weight $\frac 12$ and representation $\rho_L$ if
\begin{equation} \label{eq:weil-rep-transformation}
	\mathcal F(\gamma z) = \phi(z) \rho_L(\gamma,\phi) \mathcal F(z) \qquad \text{ for all } (\gamma,\phi) \in \Mp_2(\Z)
\end{equation}
and if $\mathcal F$ is holomorphic at $\infty$ (i.e. if the components of $\mathcal F$ are holomorphic at $\infty$ in the usual sense).
The following lemma shows how to construct such forms from vectors that transform as in Propositions \ref{prop:F-transformation} and \ref{prop:G-transformation}.

\begin{lemma} \label{lem:weil-rep}
	Suppose that $\bm H=(H_1,\ldots, H_6)$ satisfies
	\[
		\bm H \sl_{\frac 12} T = M_T \, \bm H \quad \text{ and } \quad \bm H \sl_{\frac 12} S = \zeta_{8}^{-1} \sqrt{\mfrac 25} \, M_S \, \bm H,
	\]
	where $M_T$ and $M_S$ are as in Proposition \ref{prop:F-transformation}, and
	define
	\begin{multline*}
	\mathcal H(z) := \sum_{\substack{0<h<60 \\ h\equiv \pm 1(10) \\ (h,60)=1}} \left(a_h H_3(z) + b_h H_5(z) \right)(\mathfrak e_h - \mathfrak e_{-h})
	- \sum_{\substack{0<h<60 \\ h\equiv \pm 2(10) \\ (h,60)=2}} H_1(z) (\mathfrak e_h - \mathfrak e_{-h}) \\
	+ \sum_{\substack{0<h<60 \\ h\equiv \pm 3(10) \\ (h,60)=1}} \left(a_h H_4(z) + b_h  H_6(z) \right)(\mathfrak e_h - \mathfrak e_{-h})
	- \sum_{\substack{0<h<60 \\ h\equiv \pm 4(10) \\ (h,60)=2}}  H_2(z) (\mathfrak e_h - \mathfrak e_{-h}),
	\end{multline*}
	where
	\[
	a_h = 
	\begin{cases}
		+1 & \text{ if }0<h<30, \\
		-1 & \text{ otherwise},
	\end{cases}
	\quad
	\text{ and }
	\quad
	b_h = 
	\begin{cases}
		+1 & \text{ if }h\equiv \pm 1, \pm 13 \pmod{60}, \\
		-1 & \text{ otherwise}.
	\end{cases}
	\]
	Then $\mathcal H(z)$ satisfies \eqref{eq:weil-rep-transformation}.
\end{lemma}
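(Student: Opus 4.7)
The plan is to verify \eqref{eq:weil-rep-transformation} separately on the two generators $\tilde T$ and $\tilde S$ of $\Mp_2(\Z)$. Write $\mathcal H(z) = \sum_h c_h(z)\mathfrak e_h$; by construction $c_{-h} = -c_h$, and each $c_h$ has the form $\pm H_j(z)$ or $a_h H_j(z) + b_h H_k(z)$ according to the residue class of $h$ modulo $10$ and the value of $(h,60)$.

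For $\tilde T$, since $\rho_L(\tilde T)\mathfrak e_h = \zeta_{240}^{-h^2}\mathfrak e_h$ the transformation is diagonal, so it suffices to show $c_h(z+1) = \zeta_{240}^{-h^2}c_h(z)$ for each relevant $h$. Using $\bm H(z+1) = M_T\bm H(z)$ and splitting by residue class, the required identities reduce to the congruences $h^2 \equiv 4, 196 \pmod{240}$ on the sets $S_2, S_4$ respectively, and $h^2 \equiv 1, 49 \pmod{120}$ on $S_1, S_3$ respectively (where $S_k$ denotes the set of $h$ appearing in the $k$-th sum of $\mathcal H$). For the last two classes, because $M_T$ interchanges $H_3\leftrightarrow H_5$ and $H_4\leftrightarrow H_6$, one must additionally verify the sign consistencies $a_h/b_h = (-1)^{(h^2-1)/120}$ and $a_h/b_h = (-1)^{(h^2+71)/120}$; all of these are finite checks over the relevant residues mod $60$.

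For $\tilde S$, the antisymmetry $c_{-h}=-c_h$ collapses the Weil-representation sum so that the $\mathfrak e_{h'}$ component of $\sqrt z\,\rho_L(\tilde S)\mathcal H(z)$ equals
\[
    \frac{2i\sqrt z}{\sqrt{-120i}} \sum_{0<h<60} c_h(z)\sin\!\left(\tfrac{\pi h h'}{60}\right),
\]
which must equal $c_{h'}(-1/z)$. Applying $\bm H\sl_{\frac 12} S = \zeta_8^{-1}\sqrt{\mfrac 25}\,M_S\bm H$ rewrites $c_{h'}(-1/z)/\sqrt z$ as an explicit $\C$-linear combination of $H_1(z),\ldots,H_6(z)$ determined by the row of $M_S$ selected by the residue class of $h'$. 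Equating the coefficient of each $H_i$ separately reduces the transformation to a finite list of sine-sum identities---evaluation identities such as
\[
    \sum_{h \in S_2}\sin\!\left(\tfrac{\pi h h'}{60}\right) = C(h')\,\alpha, \qquad \sum_{h \in S_4}\sin\!\left(\tfrac{\pi h h'}{60}\right) = C'(h')\,\beta,
\]
together with their analogues over $S_1, S_3$ weighted by $a_h$ or $b_h$, and vanishing identities such as $\sum_{h \in S_1} a_h \sin(\pi h h'/60) = 0$ when $h'$ lies in a class corresponding to a zero entry of $M_S$.

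The main obstacle will be establishing these sine-sum identities. The nonzero evaluations follow from the classical values $\sin 36^\circ = \alpha/2$ and $\sin 72^\circ = \beta/2$ combined with sum-to-product formulas; for instance, for $h'=1$ one computes $\sum_{h \in S_2} \sin(\pi h/60) = 2(\sin 6^\circ + \sin 66^\circ) = 4\sin 36^\circ \cos 30^\circ = \sqrt 3\,\alpha$, which matches the target $\zeta_8^{-1}\sqrt{\mfrac 25}\cdot \alpha/2$ after simplification of the $\sqrt{-120i}^{\,-1}$ and $\zeta_8^{-1}$ factors. The vanishing identities reflect orthogonality between the sign patterns $a_h, b_h$ and the relevant sine characters. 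Once these finite identities are verified, the $\tilde S$-transformation follows. Since $\Mp_2(\Z) = \langle \tilde T, \tilde S\rangle$, this completes the verification of \eqref{eq:weil-rep-transformation}.
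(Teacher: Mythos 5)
Your proposal is correct and takes essentially the same approach as the paper: the paper proves this lemma by precisely the direct generator-by-generator verification you describe, except that it delegates the tedious finite check to a computer algebra system ({\sc mathematica}) rather than organizing it by hand. Your reductions are accurate as far as I can check them --- the congruences $h^2\equiv 4,196\pmod{240}$ and $h^2\equiv 1,49\pmod{120}$ with the sign conditions $a_hb_h=(-1)^{(h^2-1)/120}$ and $a_hb_h=(-1)^{(h^2+71)/120}$ on the $T$-side all hold on the stated residue sets, and on the $S$-side your antisymmetry collapse to sine sums and the sample evaluation $\sum_{h\in S_2}\sin(\pi h/60)=4\sin 36^\circ\cos 30^\circ=\sqrt{3}\,\alpha$ matches the target coefficient $\zeta_8^{-1}\sqrt{2/5}\,\alpha/2$ after simplifying $2i/\sqrt{-120i}=-2\zeta_8^{-1}/\sqrt{120}$.
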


\begin{proof}
	The proof is a straightforward but tedious verification involving \eqref{eq:rho-T} and \eqref{eq:rho-S} that is best carried out with the aid of a computer algebra system; the author used {\sc mathematica}.
\end{proof}

\section{Proof of the mock theta conjectures} \label{sec:proof}

Let $\bm F$ and $\bm G$ be as in Section \ref{sec:preliminaries}.
To prove \eqref{mtc-1a}--\eqref{mtc-4a} it suffices to prove that $\bm F = \bm G$.
Let $\bm H := \bm F - \bm G$.
It is easy to see that the nonholomorphic parts of $\bm F$ and $\bm G$ agree, as do the terms in the Fourier expansion involving negative powers of $q$.
It follows that the function $\mathcal H$ defined in Lemma~\ref{lem:weil-rep} is a vector-valued modular form of weight $\frac 12$ with representation $\rho_L$.
By Theorem 5.1 of \cite{eichler-zagier}, the space of such forms is canonically isomorphic to the space $J_{1,60}$ of Jacobi forms of weight $1$ and index $60$.
By a theorem of Skoruppa \cite[Satz 6.1]{skoruppa-thesis} (see also \cite[Theorem 5.7]{eichler-zagier}), we have $J_{1,m}=\{0\}$ for all $m$; therefore $\mathcal H=0$.
The mock theta conjectures \eqref{mtc-1}--\eqref{mtc-4} follow. 
\qed

\section{The six remaining identities} \label{sec:remaining}

Four of the six remaining identities, those involving the mock theta functions $\psi_0$, $\psi_1$, $\phi_0$, and $\phi_1$ (see \cite{andrews-garvan} for definitions), can be proved using the methods of Sections~\ref{sec:preliminaries}--\ref{sec:proof}.
For suitable completed nonholomorphic functions $\til\psi_0$, $\til\psi_1$, $\til \phi_0$, and $\til\phi_1$, these conjectures are (see \cite[p.~206]{gordon-mcintosh-57})
\begin{align}
	2 \, \til \psi_0(z) &= \til M\left(\mfrac 15,10z\right) + 2 \, \eta_{10,1}(z) \eta(10z) h(z), \label{eq:mtc2-1} \\
	2 \, \til \psi_1(z) &= \til M\left(\mfrac 25,10z\right) + 2 \, \eta_{10,3}(z) \eta(10z) g(z), \label{eq:mtc2-2}\\
	\til \phi_0(z) &= -\mfrac 12 \, \til M\left(\mfrac 15,5z\right) + \frac{\eta(5z)\eta(2z)}{\eta(10z)} g^2(2z) h(z), \label{eq:mtc2-3} \\
	-\til \phi_1(z) &= -\mfrac 12 \, \til M\left(\mfrac 25,5z\right) + \frac{\eta(5z)\eta(2z)}{\eta(10z)} h^2(2z) g(z), \label{eq:mtc2-4}
\end{align}
where $\eta_{r,t}(z)$ is defined in \cite{robins}.
Here we have used that $g(z)h(z)=\frac{\eta(5z)}{\eta(z)}$ in the third and fourth formulas.
Following Section~\ref{sec:preliminaries}, we construct two six-dimensional vectors $\bm F_1$ and $\bm G_1$ out of the functions on the left-hand and right-hand sides, respectively, of \eqref{eq:mtc2-1}--\eqref{eq:mtc2-4}.
The transformation properties of $\bm F_1$ are given in Proposition~4.13 of \cite{zwegers}, and the corresponding properties of $\bm G_1$ follow from an argument similar to that given in the proof of Proposition~\ref{prop:G-transformation}.
For the latter argument, we use the following identity (together with the identity obtained by applying the automorphism $\tau$ as in Lemma~\ref{lem:n2-identity}):
\begin{align}
	\til N\left(\mfrac 15, 2z\right) &+ \alpha \, \til M \left(\mfrac 15, 50z\right) + \beta \, \til M \left(\mfrac 25, 50z\right) \notag \\*
	&= 2 \, \frac{\eta(2z)\eta(5z)}{\eta(z)} \left(\alpha^{-1} g(5z) + \beta^{-1} h(5z)\right)^2 \big( \alpha \, g(10z) - \beta \, h(10z) \big) \notag \\*
	& \hspace{1.5in}-2 \eta(50z) \big( \alpha \, \eta_{10,1}(5z) h(5z) + \beta \, \eta_{10,3}(5z) g(5z) \big). \label{eq:n1-id-2} 
\end{align}
The proof of \eqref{eq:n1-id-2} is analogous to the proof of Lemma~\ref{lem:n1-identity}, and requires the transformation properties of $\eta_{10,1}$ and $\eta_{10,3}$, given in \cite{robins}.
The proof that $\bm F_1 = \bm G_1$ follows exactly as in Section~\ref{sec:proof}.

The two remaining identities involve the mock theta functions $\chi_0$ and $\chi_1$.
Using the relations (discovered by Ramanujan and proved by Watson \cite[($B_0$) and ($B_1$)]{watson})
\begin{align*}
	\chi_0(q) &= 2F_0(q) - \phi_0(-q), \\
	\chi_1(q) &= 2F_1(q) + q^{-1} \phi_1(-q),
\end{align*}
these mock theta conjectures (see \cite[p. 206]{gordon-mcintosh-57}) are implied by the identities
\begin{align}
	2 \til F_0(z) - \til \phi_0(z) &= \mfrac 32 \, \til M \left(\mfrac 15,5z\right) - \eta(5z)\frac{g^2(z)}{h(z)}, \label{eq:mtc-chi-0} \\
	2 \til F_1(z) + \til \phi_1(z) &= \mfrac 32 \, \til M \left(\mfrac 25,5z\right) + \eta(5z)\frac{h^2(z)}{g(z)}. \label{eq:mtc-chi-1}
\end{align}
By \eqref{mtc-3a}, \eqref{mtc-4a}, \eqref{eq:mtc2-3}, and \eqref{eq:mtc2-4}, equations \eqref{eq:mtc-chi-0} and \eqref{eq:mtc-chi-1} follow from the identities (see \cite[(1.25) and (1.26)]{robins-thesis} for a proof using modular forms)
\begin{align*}
	g^2(z) h(2z) - h^2(z)g(2z) &= 2h(z) h^2(2z) \frac{\eta^2(10z)}{\eta^2(5z)}, \\
	g^2(z) h(2z) + h^2(z)g(2z) &= 2g(z) g^2(2z) \frac{\eta^2(10z)}{\eta^2(5z)}.
\end{align*}
This completes the proof of the remaining mock theta conjectures.

\section{Proof of Lemma~\ref{lem:n1-identity}} \label{sec:lem-proof}
Let $L(z)$ and $R(z)$ denote the left-hand and right-hand sides of \eqref{eq:n1-identity}, respectively.
Let $\Gamma$ denote the congruence subgroup
\[
	\Gamma = \Gamma_0(50) \cap \Gamma_1(5) = \left\{\pmmatrix abcd : c\equiv 0\bmod 50 \text{ and }a,d\equiv 1\bmod 5\right\}.
\]
We claim that
\begin{equation} \label{eq:claim}
	\eta(z) L(z), \eta(z) R(z) \in M_1 (\Gamma),
\end{equation}
where $M_k(G)$ (resp. $M_k^!(G)$) denotes the space of holomorphic (resp. weakly holomorphic) modular forms of weight $k$ on $G \subseteq \SL_2(\Z)$.
We have
\[
	\mfrac{1}{12}[\SL_2(\Z):\Gamma] = 15,
\]
so once \eqref{eq:claim} is established it suffices to check that the first 16 coefficients of $\eta(z) L(z)$ and $\eta(z) R(z)$ agree.
A computation shows that the Fourier expansion of each function begins
\begin{multline*}
\mfrac 2{\sqrt 5} \left( \beta - \alpha^2 \beta \, q^2 - \alpha \beta^2 \, q^3 + \beta^3 \, q^5 - \alpha^2 \beta \, q^7 + 2\alpha^2 \beta \, q^{10} - \alpha^2 \beta \, q^{12} - \alpha \beta^2 \, q^{13} + 2\alpha\beta^2 \, q^{15} + \ldots \right).
\end{multline*}

To prove \eqref{eq:claim}, we first note that Theorem 5.1 of \cite{garvan} shows that $\eta(25z)L(z) \in M_1^!(\Gamma_0(25) \cap \Gamma_1(5))$;
since $\eta(z)/\eta(25z) \in M_0^!(\Gamma_0(25))$ it follows that $\eta(z) L(z) \in M_1^!(\Gamma)$.
Suppose that $\gamma=\pmatrix abcd\in \Gamma$, and let
\[
	\gamma_n := \pmmatrix{a}{nb}{c/n}{d}.
\]
Then $\pmatrix{n}001\gamma = \gamma_{n} \pmatrix{n}001$.
By a result of Biagioli \cite[Proposition 2.5]{biagioli} we have
\begin{equation}
	g(10z) \sl_{0} \gamma = v_{\eta}^{14}(\gamma_{10}) \, g(10z) \quad \text{ and } \quad h(10z) \sl_{0} \gamma = v_{\eta}^{14}(\gamma_{10}) \, h(10z)
\end{equation}
where $v_{\eta}$ is the multiplier system for $\eta(z)$ (see \cite[(2.5)]{biagioli}).
For $d$ odd we have
\begin{equation} \label{eq:eta-mult}
	v_\eta^2(\gamma) = (-1)^{\frac{d-1}{2}} \zeta_{12}^{-ac(d^2-1)+d(b-c)}.
\end{equation}
We have
$\eta^2(2z) \sl_{1} \gamma = v_\eta^2(\gamma_2) \eta^2(2z)$
and
$\eta^2(50z) \sl_{1} \gamma = v_\eta^2(\gamma_{50}) \eta^2(50z)$.
A computation involving \eqref{eq:eta-mult} shows that
\[
	v_\eta^2(\gamma_2) v_\eta^{14}(\gamma_{10}) = v_\eta^2(\gamma_{50}) v_\eta^{14}(\gamma_{10}) = 1.
\]
It follows that $\eta(z)R(z) \in M_1^!(\Gamma)$.

It remains to show that $\eta(z)L(z)$ and $\eta(z)R(z)$ are holomorphic at the cusps.
Using {\sc magma} we compute a set of $\Gamma$-inequivalent cusp representatives:
\begin{equation} \label{eq:cusps}
\left\{\infty, \, 0, \, \mfrac{1}{8}, \, \mfrac{2}{15}, \, \mfrac{1}{7}, \, \mfrac{3}{20}, \, \mfrac{1}{6}, \, \mfrac{1}{5}, \, \mfrac{13}{50}, \, \mfrac{4}{15}, \, \mfrac{11}{40}, \, \mfrac{7}{25}, \, \mfrac{3}{10}, \, \mfrac{7}{20}, \, \mfrac{9}{25}, \, \mfrac{11}{30}, \, \mfrac{2}{5}, \, \mfrac{8}{15}, \, \mfrac{11}{20}, \, \mfrac{3}{5}, \, \mfrac{7}{10}, \, \mfrac{29}{40}, \, \mfrac{11}{15}, \, \mfrac{4}{5}\right\}.
\end{equation}
Given a cusp $\mathfrak a \in \P^1(\Q)$ and a meromorphic modular form $f$ of weight $k$ with Fourier expansion $f(z)=\sum_{n\in \Q}a(n) q^n$, the invariant order of $f$ at $\mathfrak a$ is defined as
\begin{align*}
	\ord(f,\infty) &:= \min \{n: a(n)\neq 0\}, \\
	\ord(f,\mathfrak a) &:= \ord(f\sl_{k} \gamma_{\mathfrak a}, \infty),
\end{align*}
where $\gamma_{\mathfrak a}\in \SL_2(\Z)$ sends $\infty$ to $\mathfrak a$.
For $N\in \N$, we have the relation (see e.g. \cite[(1.7)]{biagioli})
\begin{equation} \label{eq:ord-relation}
	\ord(f(Nz), \tfrac rs) = \mfrac{(N,s)^2}{N} \ord(f,\tfrac {Nr}s).
\end{equation}
We extend this definition to functions $f$ in the set
\[
	S := \{\til M(\tfrac a5,z), \til N(\tfrac a5,z) : a=1,2\} \cup \{\til M(a,b,z), \til N(a,b,z) : 0\leq a\leq 4, 1\leq b\leq 4\}
\] 
by defining the orders of these functions at $\infty$ to be the orders of their holomorphic parts at $\infty$ (see Section 6.2 and (2.1)--(2.4) of \cite{garvan}); that is,
\begin{align}
	\ord\left(\til M\left(\mfrac a5, z\right), \infty\right) =  \ord\left(\til M(a,b,z), \infty\right) &:= \mfrac{3a}{10}\left(1-\mfrac a5\right) - \mfrac 1{24}, \label{eq:m-ord}\\
	\ord\left(\til N\left(\mfrac a5, z\right), \infty\right) &:= -\mfrac 1{24}, \label{eq:n-ord-1}\\
	\ord\left(\til N(a,b,z), \infty\right) &:= \mfrac b5 k(b,5) - \mfrac{3b^2}{50} - \mfrac 1{24},\label{eq:n-ord-2}
\end{align}
where $k(b,5)=1$ if $b\in\{1,2\}$ and $2$ if $b\in\{3,4\}$.
Lastly, for $f\in S$ we define
\begin{equation}
	\ord\left(f,\mathfrak a\right) := \ord(f\sl_{\frac 12}\gamma_{\mathfrak a}, \infty).
\end{equation}
This is well-defined since $S$ is closed (up to multiplication by roots of unity) under the action of $\SL_2(\Z)$.
By this same fact we have
\begin{equation} \label{eq:eta-n-cusps}
	\min_{\text{cusps }\mathfrak a} \ord\left(\til N(\tfrac a5,z),\mathfrak a\right) \geq \min_{f\in S} \ord(f,\infty) = -\mfrac 1{24},
\end{equation}
from which it follows that
\[
	\ord\left(\eta(z)\til N(\tfrac a5,z),\mathfrak a\right) \geq 0 \qquad \text{ for all }\mathfrak a.
\]

To determine the order of $\eta(z)\til M(\tfrac a5,25z)$ at the cusps of $\Gamma$, we write
\[
	\eta(z)\til M(\tfrac a5,25z) = \frac{\eta(z)}{\eta(25z)} m(25z), \qquad \text{ where } m(z) = \eta(z)\til M(\tfrac a5,z).
\]
The cusps of $\Gamma_0(25)$ are $\infty$ and $\frac r5$, $0\leq r\leq 4$. 
By \eqref{eq:ord-relation} the function $\eta(z)/\eta(25z)$ is holomorphic at every cusp except for those which are $\Gamma_0(25)$-equivalent to $\infty$ (the latter are $\frac{13}{50}$, $\frac 7{25}$, and $\frac{9}{25}$ in \eqref{eq:cusps}); there we have $\ord(\eta(z)/\eta(25z),\infty)=-1$.
Since $\til M(\frac a5,z)$ also satisfies \eqref{eq:eta-n-cusps}, it suffices to check 
$\frac {13}{50}$, $\frac {7}{25}$, and $\frac {9}{25}$.
By \eqref{eq:ord-relation}, \cite[Theorems 3.1 and 3.2]{garvan}, the fact that $\pmatrix{13}{6}{2}{1} = T^6S^{-1}T^{-2}S$, and \eqref{eq:sl}, we have 
\begin{align*}
	\ord\left(m(25z),\mfrac{13}{50}\right) 
	&= 25 \ord\left(m(z),\mfrac{13}{2}\right) \\
	&= 25 \left(\mfrac 1{24} + \ord\left(\til M(3a \bmod 5,a,z),\infty\right)\right) = 
	\begin{cases}
		9 & \text{ if }a=1, \\
		6 & \text{ if }a=2.
	\end{cases}
\end{align*}
A similar computation shows that $\ord(m(25z),\frac 7{25}),\ord(m(25z),\frac 9{25}) \geq 4$.
Since $L(z)$ is holomorphic on $\H$,
we have, for each cusp $\mathfrak a$, the inequality
\[
	\ord(\eta(z) L(z), \mathfrak a) \geq \min \left\{ \ord\left(\eta(z) f(z),\mathfrak a\right) : f(z)=\til N(\tfrac 15,z), \til M(\tfrac 15,25z), \til M(\tfrac 25,25z) \right\} \geq 0.
\]

We turn to $\eta(z)R(z)$.
For this we require Theorem 3.3 of \cite{biagioli}, which states that
\begin{align}
	\ord(g,\tfrac rs) &= 
	\begin{cases}
		\mfrac{11}{60} & \text{ if } 5\mid s\text{ and } r\equiv \pm 2 \pmod 5, \\
		-\mfrac{1}{60} & \text{ otherwise,}
	\end{cases} \label{eq:g-cusp} \\
	\ord(h,\tfrac rs) &= 
	\begin{cases}
		\mfrac{11}{60} & \text{ if } 5\mid s\text{ and } r\equiv \pm 1 \pmod 5, \\
		-\mfrac{1}{60} & \text{ otherwise.}
	\end{cases} \label{eq:h-cusp}
\end{align}
Here we have corrected a typo in (3.2) of \cite{biagioli} (see (2.9) and Lemma 3.2 of that paper).
By \eqref{eq:ord-relation}, \eqref{eq:g-cusp}, and \eqref{eq:h-cusp} we have
\[
	\ord\left( \eta(z)R(z), \tfrac rs \right) \geq - \mfrac{(10,s)^2}{600} + \min \left\{ \mfrac 1{24} + \mfrac{(50,s)^2-(25,s)^2}{600}, \mfrac{(2,s)^2}{24} \right\}.
\]
Since the latter expression is nonnegative for all $s\mid 50$, it follows that $\eta(z)R(z)$ is holomorphic at the cusps.
This completes the proof.
\qed

\bibliographystyle{alphanum-2}
\bibliography{mtc-bib}

\end{document}